\documentclass[a4paper,12pt]{amsart}

\usepackage{amscd}
\usepackage{amssymb, color}

\pagestyle{plain}


\oddsidemargin 1.6cm
\evensidemargin 1.6cm
\textwidth 15.3cm
\hoffset -1cm

\theoremstyle{plain}
  \newtheorem{thm}{Theorem}
  
  \newtheorem{lem}[thm]{Lemma}
  \newtheorem{cor}[thm]{Corollary}



  \newtheorem{thqt}{Theorem}


\theoremstyle{definition}

  \newtheorem{defi}{Definition}

\numberwithin{equation}{section}

\author[T. Yamazaki]{Takeaki Yamazaki}
\address{%
Department of Electrical, Electronic and Computer Engineering,
Toyo University,
Kawagoe-Shi, Saitama, 350-8585, Japan.
} \email{t-yamazaki@toyo.jp}

\keywords{Positive definite operator; operator mean;
operator monotone function; power mean; the 
Ando-Hiai inequality.}

\subjclass[2010]{Primary 47A64. Secondary 47A63.}

\title{An integral representation of operator means
via the power means and an application to the Ando-Hiai
inequality 
}

\begin{document}

\begin{abstract}
An integral representation of an operator mean 
via the power means is obtained. 
As an application, we shall give 
explicit condition of operator means that the 
Ando-Hiai inequality holds.
\end{abstract}

\maketitle

\section{Introduction}
The theory of operator means have been started in 
\cite{PW1975}. In this paper, the operator geometric mean
has been defined. Then the axiom of operator means was
given in \cite{KA1980}. In this discussion, Kubo and Ando
 obtained that there is a one-to-one 
 correspondence between operator means and 
 operator monotone functions.
This operator monotone function is called 
a representing function of an operator mean. 
Applying the Loewner's theorem,
each representing function of operator means has
an integral representation via the harmonic means.
Using such an integral representation,
we can obtain a lot of properties of operator means
by using the properties of the harmonic means.
However it is difficult to obtain individual properties of 
operator means by using the 
integral representation.
In this paper, we shall give an integral representation of
a representing function of an operator mean via the 
power mean. Because of 
the power mean interpolates the arithmetic, 
the geometric and the harmonic means,
this integral representation can be considered as 
a more precise result than the known 
result.
In fact, we shall give a property of operator means
which are greater than the geometric means,
and give an explicit condition of 
operator means that the Ando-Hiai inequality holds.

In what follows let $\mathcal{H}$ be a Hilbert space
with an inner product $\langle \cdot, \cdot\rangle$, 
and $\mathcal{B(H)}$ be a set of all 
bounded linear operators 
on $\mathcal{H}$.
An operator $A\in \mathcal{B(H)}$ is positive definite 
(resp. positive semi-definite)
if $\langle Ax,x\rangle >0$ 
(resp. $\langle Ax,x\rangle \geq 0$)
holds for all non-zero 
$x\in \mathcal{H}$.
If $A$ is positive semi-definite, we denote 
$A\geq 0$. 
Let $\mathcal{PS}, \mathcal{P}\subset 
\mathcal{B(H)}$ 
be the sets of all positive semi-definite
and positive definite operators, respectively.
For self-adjoint operators $A$ and $ B$, 
$A\geq B$ is defined by $A-B\geq 0$.
A real-valued function $f$ 
defined on an interval $I$ 
satisfying 
$$ B\leq A \ \Longrightarrow \ f(B)\leq f(A) $$
for all self-adjoint operators $A,B\in 
\mathcal{B(H)}$ such that
$\sigma(A), \sigma(B)\subset I$ 
is called an operator monotone function,
where $\sigma(X)$ means the spectrum of
$X\in \mathcal{B(H)}$.

\subsection{Operator mean}

\begin{defi}[Operator mean, \cite{KA1980}]
Let $\sigma: \mathcal{PS}^{2} \to \mathcal{PS}$ be
a binary operation. If $\sigma$ satisfies the 
following four conditions, then $\sigma$ is called
an operator mean.
\begin{itemize}
\item[(1)] If $A\leq C$ and $B\leq D$, then 
$\sigma(A,B)\leq \sigma(C,D)$,
\item[(2)] $X^{*}\sigma(A,B)X\leq 
\sigma(X^{*}AX, X^{*}BX)$ for all $X\in 
B(\mathcal{H})$,
\item[(3)] $\sigma$ is upper semi-continuous 
on $\mathcal{PS}^{2}$,
\item[(4)] $\sigma(I,I)=I$, where $I$ means the 
identity operator in $\mathcal{B(H)}$.
\end{itemize}
\end{defi}

We notice that if $X$ is invertible in (2),
then equality holds.

\begin{thqt}[\cite{KA1980}]
Let $\sigma$ be an operator mean.
Then there exists an operator monotone 
function $f$ on $(0, \infty)$ such that $f(1)=1$ and 
$$ \sigma(A,B)=A^{\frac{1}{2}}f(A^{\frac{-1}{2}}
BA^{\frac{-1}{2}})A^{\frac{1}{2}} $$
for all $A\in \mathcal{P}$ and $B\in \mathcal{PS}$.
A function $f$ is called a representing 
function of an operator mean $\sigma$.
\end{thqt}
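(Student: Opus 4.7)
The strategy is to reconstruct the representing function $f$ as a ``scalar trace'' of $\sigma$ and then bootstrap to operator arguments via the transformer inequality (2). First, for each $t\geq 0$, I would show $\sigma(I,tI)=f(t)I$ for some scalar $f(t)\geq 0$: applying (2) with an arbitrary unitary $U$, which is invertible and hence yields equality, gives $U^{*}\sigma(I,tI)U=\sigma(I,tI)$, so $\sigma(I,tI)$ commutes with every unitary and is therefore a scalar. The axiom (4) immediately forces $f(1)=1$.

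Second, I would extend the identity $\sigma(I,B)=f(B)$ to arbitrary $B\in\mathcal{PS}$. For $B$ with finite spectrum, write $B=\sum_{i} t_{i} P_{i}$ with mutually orthogonal projections $P_{i}$ summing to $I$. Applying (2) with equality for every unitary $U$ that commutes with each $P_{i}$ shows that $\sigma(I,B)$ commutes with each $P_{i}$, so $\sigma(I,B)=\sum_{i} c_{i} P_{i}$ for some scalars $c_{i}$. Next, applying (2) with $X=P_{i}$ and interpreting the resulting operator mean on the subspace $P_{i}\mathcal{H}$ (where $P_{i}$ plays the role of the identity) reduces each $c_{i}$ to the scalar case and pins down $c_{i}=f(t_{i})$. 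Upper semi-continuity (3) then extends $\sigma(I,B)=f(B)$ from step functions of $B$ to all $B\in\mathcal{PS}$.

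Third, for a general $A\in\mathcal{P}$ the formula follows from the equality case of (2) with the invertible $X=A^{1/2}$:
\[ \sigma(A,B)=\sigma\bigl(A^{1/2}IA^{1/2},\,A^{1/2}(A^{-1/2}BA^{-1/2})A^{1/2}\bigr)=A^{1/2}\sigma(I,A^{-1/2}BA^{-1/2})A^{1/2}, \]
which combined with the previous step produces the required representation. Operator monotonicity of $f$ on $(0,\infty)$ is then immediate from (1): if $0<S\leq T$ have spectra in $(0,\infty)$, then $f(S)=\sigma(I,S)\leq\sigma(I,T)=f(T)$.

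The main obstacle, I expect, is the second step: passing from the trivial scalar identity $\sigma(I,tI)=f(t)I$ to the operator-argument identity $\sigma(I,B)=f(B)$. Because the transformer inequality is only one-sided for non-invertible $X$ such as a spectral projection, one must combine it with the equality case for suitable unitaries to pin down $\sigma(I,B)$ on each spectral subspace of $B$, and then invoke (3) carefully to handle operators without discrete spectrum.
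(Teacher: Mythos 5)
First, a point of reference: the paper gives no proof of Theorem A --- it is imported verbatim from Kubo--Ando \cite{KA1980} --- so there is no internal argument to compare yours against, and I can only assess your sketch on its own terms. Steps 1, 3 and 4 of your plan are sound and follow the standard route: the unitary-conjugation argument (equality in (2) for invertible $X$) makes $\sigma(I,tI)$ scalar and defines $f$ with $f(1)=1$ by (4); the equality case of (2) with $X=A^{1/2}$ reduces a general $A\in\mathcal{P}$ to $A=I$; and operator monotonicity of $f$ falls out of axiom (1) once $\sigma(I,B)=f(B)$ is established.

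The genuine gap sits exactly where you suspect it, in the identification $c_i=f(t_i)$, and your proposed remedy does not close it. ``Interpreting the resulting operator mean on the subspace $P_i\mathcal{H}$, where $P_i$ plays the role of the identity'' is not a move the axioms license: $\sigma$ is a binary operation on $\mathcal{PS}$ for the fixed space $\mathcal{H}$, and nothing in (1)--(4) says it restricts to, or induces a mean on, a subspace; moreover the unitaries you invoke only show that $\sigma(I,B)$ is scalar on each $P_i\mathcal{H}$ --- they carry no information about the value of that scalar. With the non-invertible $X=P_i$, axiom (2) gives only the one-sided bound $c_iP_i=P_i\sigma(I,B)P_i\le\sigma(P_i,t_iP_i)$, which together with monotonicity $\sigma(P_i,t_iP_i)\le\sigma(I,t_iI)=f(t_i)I$ yields $c_i\le f(t_i)$ but not the reverse. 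The missing idea is to apply (2) with $X=P_i$ a \emph{second} time, to the pair $(I,t_iI)$ rather than to $(I,B)$: this gives $f(t_i)P_i=P_i\,\sigma(I,t_iI)\,P_i\le\sigma(P_i,t_iP_i)$, and then monotonicity in both arguments gives $\sigma(P_i,t_iP_i)\le\sigma(I,t_iP_i)\le\sigma(I,B)$, whence $f(t_i)P_i\le\sigma(I,B)$; compressing by $P_i$ yields $c_i\ge f(t_i)$ and closes the squeeze. You should also justify the continuity of $f$ on $(0,\infty)$, which you need both to form $f(B)$ by functional calculus for general $B$ and to pass to the limit in your upper semi-continuity step (monotone functions need not be continuous; here continuity follows, e.g., from the concavity of $f$, itself a consequence of the axioms). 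With these repairs your outline becomes the standard proof of this part of the Kubo--Ando theorem.
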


Let $\varepsilon$ be a positive real number. Then
we have $A_{\varepsilon}=A+\varepsilon I,
B_{\varepsilon}=B+\varepsilon I\in \mathcal{P}$
for $A,B\in \mathcal{PS}$, 
and we can compute an operator mean 
$\sigma(A,B)$ by 
$ \sigma(A,B)=\lim_{\varepsilon\searrow 0}
\sigma(A_{\varepsilon},B_{\varepsilon}). $
We note that for an operator mean 
$\sigma$ with a representing function $f$,
$f'(1)=\lambda
\in [0,1]$ (cf. \cite{H2013}),
and we call $\sigma$ a $\lambda$-weighted operator
mean. 
Typical examples of operator means 
are the $\lambda$-weighted geometric  
and $\lambda$-weighted power means. These 
representing functions are $f(x)=x^{\lambda}$ and 
$f(x)=[1-\lambda+\lambda x^{t}]^{\frac{1}{t}}$, 
respectively, 
where $\lambda\in [0,1]$
and $t\in[-1,1]$ (in the case $t=0$, we consider 
$t\to 0$). The weighted power mean interpolates 
the arithmetic, the geometric and the harmonic means 
by putting $t=1,0,-1$, respectively.
In what follows, the $\lambda$-weighted geometric and 
$\lambda$-weighted power means 
of $A, B\in \mathcal{P}$ are denoted by
$A\sharp_{\lambda} B$ and 
$P_{t}(\lambda;A,B)$, respectively,
i.e., 
\begin{align*}
A\sharp_{\lambda}B  & 
= A^{\frac{1}{2}}(A^{\frac{-1}{2}}B
A^{\frac{-1}{2}})^{\lambda}A^{\frac{1}{2}},\\
 P_{t}(\lambda;A,B)  & =
A^{\frac{1}{2}}\left[1-\lambda+\lambda(A^{\frac{-1}{2}}B
A^{\frac{-1}{2}})^{t}\right]^{\frac{1}{t}}A^{\frac{1}{2}}.
\end{align*}
In what follows $p_{t}$ denotes the representing 
function of the power mean $P_{t}$, i.e., 
$$ p_{t}(\lambda;x)=[1-\lambda+\lambda x^{t}
]^{\frac{1}{t}}. $$
$p_{t}$ 
is monotone increasing on a parameter $t\in [-1,1]$.
The Loewner's theorem gives us 
an integral representation of an operator monotone 
function on $(0, \infty)$ such that $f(1)=1$
as follows.
\begin{thqt}[cf. \cite{H2013}]\label{thqt:Loewner}
Let $\sigma$ be an operator mean,
and $f_{\sigma}$ be a 
representing function of $\sigma$.
Then there exists a provability measure $d\mu$
on $[0,1]$ such that
$$ f_{\sigma}(x)  =\int_{0}^{1} p_{-1}(\lambda; x) d\mu(\lambda) 
 =\int_{0}^{1} [(1-\lambda)+\lambda x^{-1}]^{-1} 
d\mu(\lambda). 
$$
\end{thqt}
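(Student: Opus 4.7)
The plan is to deduce Theorem~\ref{thqt:Loewner} from the classical Loewner integral representation for operator monotone functions on $(0,\infty)$, followed by a single change of variables that converts the parameter $t\in[0,\infty]$ into the weight $\lambda\in[0,1]$.

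First I would invoke the standard fact that every operator monotone function on $(0,\infty)$ taking positive values admits an integral representation against the ``resolvent'' kernel $x\mapsto \frac{(1+t)x}{x+t}$, where the representing measure lives on $[0,\infty]$; because $f_\sigma(1)=1$ and each of these elementary kernels also equals $1$ at $x=1$, the measure is forced to be a probability measure. This gives
$$
f_\sigma(x) \;=\; \int_{[0,\infty]} \frac{(1+t)\,x}{x+t}\,dm(t)
$$
for some probability measure $dm$ on $[0,\infty]$. This step uses only the general Loewner/Nevanlinna theory together with the normalization built into an operator mean.

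Next I would perform the substitution $\lambda = t/(1+t)$, which is a homeomorphism $[0,\infty]\to[0,1]$ sending $0\mapsto 0$ and $\infty\mapsto 1$, with inverse $t=\lambda/(1-\lambda)$ and $1+t=1/(1-\lambda)$. A direct algebraic manipulation then shows
$$
\frac{(1+t)\,x}{x+t}
\;=\; \frac{x}{(1-\lambda)x+\lambda}
\;=\; \bigl[(1-\lambda)+\lambda x^{-1}\bigr]^{-1}
\;=\; p_{-1}(\lambda;x),
$$
and the push-forward of $dm$ under this map is a probability measure $d\mu$ on $[0,1]$ for which
$f_\sigma(x)=\int_0^1 p_{-1}(\lambda;x)\,d\mu(\lambda)$, which is exactly the assertion.

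The only genuine hurdle is obtaining the starting representation in precisely the $(1+t)x/(x+t)$ normalization; once that is secured, everything reduces to a clean change of variables. Alternatively, one may appeal directly to the Kubo--Ando representation theorem from \cite{KA1980}, which is essentially this statement phrased in the harmonic-mean variable already, so that Theorem~\ref{thqt:Loewner} is obtained by reading off the integrand as $p_{-1}(\lambda;x)$.
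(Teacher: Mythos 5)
Your proposal is correct. Note, however, that the paper offers no proof of this statement at all: it is quoted as a known result (Theorem~\ref{thqt:Loewner}, ``cf.\ \cite{H2013}''), so there is nothing internal to compare against. Your derivation is the standard one: start from the Loewner/Kubo--Ando representation $f_\sigma(x)=\int_{[0,\infty]}\frac{(1+t)x}{x+t}\,dm(t)$ with $dm$ a positive measure on the compactified half-line (the endpoint masses at $t=0$ and $t=\infty$ absorbing the constant and linear terms), observe that every kernel equals $1$ at $x=1$ so that $f_\sigma(1)=1$ forces $dm$ to be a probability measure, and push forward under $\lambda=t/(1+t)$; the algebra identifying $\frac{(1+t)x}{x+t}$ with $[(1-\lambda)+\lambda x^{-1}]^{-1}=p_{-1}(\lambda;x)$ checks out, including the endpoint correspondences $\lambda=0\mapsto 1$ and $\lambda=1\mapsto x$. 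The only thing you are taking on faith is the starting representation itself, which is exactly what the paper's citation to \cite{H2013} (or \cite{KA1980}) is meant to supply, so your argument is an adequate and essentially complete filling-in of that citation.
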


It is a very useful result, and can give us a lot of 
properties of operator means.
However it is difficult to give us 
individual properties of operator means.
According to Theorem \ref{thqt:Loewner}, 
we have $f'(1)=\int_{0}^{1}\lambda d\mu(\lambda)$.
Using the harmonic -arithmetic mean inequality, we 
have
\begin{equation}
\begin{split}
 f(x) & =
\int_{0}^{1} [(1-\lambda)+\lambda x^{-1}]^{-1} 
d\mu(\lambda) \\
& \leq 
\int_{0}^{1} [(1-\lambda)+\lambda x] 
d\mu(\lambda) 
= 1-f'(1)+f'(1)x 
\end{split}
\label{MA-inequality}
\end{equation}
holds for all $x\in (0,\infty)$, 

\medskip

The first aim of this paper is to 
generalize Theorem \ref{thqt:Loewner} into 
more precise form by using the power means.

\subsection{The Ando-Hiai inequality}
The Ando-Hiai inequality is one of the most 
important operator inequalities.

\begin{thqt}[The Ando-Hiai inequality, \cite{AH1994}]
\label{thqt:AH}
Let $A,B\in \mathcal{PS}$. If for each $t\in [0,1]$,
$A\sharp_{t}B\leq I$, then 
$A^{r}\sharp_{t}B^{r}\leq I$ holds for all 
$r\geq 1$.
\end{thqt}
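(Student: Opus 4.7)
The plan is to reduce the statement to a spectral inequality for the single positive operator $T := A^{-1/2} B A^{-1/2}$. By the continuity $\sigma(A,B) = \lim_{\varepsilon \searrow 0} \sigma(A_\varepsilon, B_\varepsilon)$ recorded in the excerpt, it suffices to treat $A, B \in \mathcal{P}$. Using Theorem A with the representing function $f_t(x) = x^t$ of $\sharp_t$, the hypothesis $A \sharp_t B \leq I$ is equivalent to $T^t \leq A^{-1}$, while the target $A^r \sharp_t B^r \leq I$ is equivalent to $(A^{-r/2} B^r A^{-r/2})^t \leq A^{-r}$. The trivial endpoints $t = 0$ and $t = 1$ reduce to the observation that $A \leq I$ (resp.\ $B \leq I$) implies $A^r \leq I$ (resp.\ $B^r \leq I$) for $r \geq 1$, so only $t \in (0,1)$ requires real work.

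The core problem is then to derive $(A^{-r/2} B^r A^{-r/2})^t \leq A^{-r}$ from $T^t \leq A^{-1}$, and this is the genuinely difficult step. The obstruction is twofold: the noncommutativity of $A$ and $B$ prevents the identification $A^{-r/2} B^r A^{-r/2} = T^r$, and the map $X \mapsto X^r$ is not operator monotone for $r > 1$, so one cannot simply raise the hypothesis to a higher power. The standard tool that overcomes both obstacles simultaneously is the Furuta inequality: if $C \geq D \geq 0$ and $\alpha \geq 0$, $p \geq 0$, $q \geq 1$ satisfy $(1+\alpha)q \geq p + \alpha$, then $(C^{\alpha/2} D^p C^{\alpha/2})^{1/q} \leq C^{(p+\alpha)/q}$. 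I would apply this with $C = A^{-1}$ and $D = T^t$ (which satisfy $C \geq D$ by hypothesis), choosing the parameter triple so that the right-hand side is $A^{-r}$ and the left-hand side, after conjugation by a suitable power of $A$, reproduces $(A^{-r/2} B^r A^{-r/2})^t$.

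The main obstacle I anticipate is precisely this exponent bookkeeping: selecting Furuta parameters $(p, q, \alpha)$ that deliver exactly the target form, rather than a weaker intermediate inequality, requires care. An alternative route that sidesteps the full strength of Furuta is to establish integer $r$ inductively, with the base case $r = 2$ handled by a direct Hansen-type manipulation of the inner factor $A^{-1/2} B^2 A^{-1/2}$, and then to extend to real $r \geq 1$ by continuity together with the monotonicity of $r \mapsto A^r \sharp_t B^r$ in $r$. Either path concentrates all of the noncommutative difficulty in the one step where first powers of $A, B$ are traded for $r$-th powers; the reductions, endpoint cases, and final interpolation are routine.
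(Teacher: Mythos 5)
First, a remark on scope: the paper offers no proof of this statement --- it is quoted verbatim from \cite{AH1994} as background --- so your attempt can only be judged on its own merits, not against an argument in the text.

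Your reductions (continuity to pass to $\mathcal{P}$, the reformulation of hypothesis and conclusion as $T^{t}\leq A^{-1}$ and $(A^{-r/2}B^{r}A^{-r/2})^{t}\leq A^{-r}$ with $T=A^{-1/2}BA^{-1/2}$, and the trivial endpoints) are all fine, but the step you defer as ``exponent bookkeeping'' is where the proof actually lives, and the plan as stated does not close it. Applying the Furuta inequality to $C=A^{-1}\geq D=T^{t}$ yields
$$\bigl(A^{-\alpha/2}\,(A^{-1/2}BA^{-1/2})^{tp}\,A^{-\alpha/2}\bigr)^{1/q}\leq A^{-(p+\alpha)/q},$$
and no choice of $(p,q,\alpha)$ turns the left-hand side into $(A^{-r/2}B^{r}A^{-r/2})^{t}$: the inner factor is a power of $A^{-1/2}BA^{-1/2}$, not of $B$, and symmetric conjugation by powers of $A$ cannot repair this (the only choice that removes the outer fractional power, $tp=1$, produces an inequality in which $B$ still appears to the first power). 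The structural point is that a single application of Furuta raises the exponent on \emph{one} of the two operators, whereas Ando--Hiai raises both simultaneously; this is precisely why the two inequalities were long regarded as independent and are jointly subsumed only by the grand Furuta inequality. The known derivation of Ando--Hiai from Furuta (Fujii--Kamei) requires \emph{two} applications combined with the transposition symmetry $A\sharp_{t}B=B\sharp_{1-t}A$ (equivalently, the satellite form $A^{-\alpha}\sharp_{(1+\alpha)/(p+\alpha)}B^{p}\leq B$), and that idea is absent from your outline. Your fallback route has a second gap: $r\mapsto A^{r}\sharp_{t}B^{r}$ is not operator-monotone in $r$, so one cannot interpolate from integer $r$ to real $r\geq 1$ by ``monotonicity in $r$''; the standard reduction is instead to $r\in[1,2]$ together with the multiplicativity observation that validity for $r_{1}$ and $r_{2}$ gives validity for $r_{1}r_{2}$.
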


This result only discusses the 
operator geometric means,
and it has been generalized in \cite{W2014, Y2012},
for example.
Especially, in \cite{W2014}, 
Wada obtained an equivalent condition 
of operator means that Theorem \ref{thqt:AH}
holds.

\begin{thqt}[\cite{W2014}]\label{thqt:W}
Let $\sigma$ be an operator mean with 
a representing function $f_{\sigma}$.
Then the following conditions are equivalent:
\begin{itemize}
\item[{\rm (1)}] $f_{\sigma}(x)^{r}\leq 
f_{\sigma}(x^{r})$ for all $r\geq 1$ 
and $x\in (0,\infty)$,
\item[{\rm (2)}] $\sigma(A,B)\leq I$ implies 
$\sigma(A^{r},B^{r})\leq I$ for all $r\geq 1$ and
$A,B\in \mathcal{PS}$.
\end{itemize}
\end{thqt}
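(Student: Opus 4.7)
My plan is to establish each implication of the equivalence separately.

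For $(2) \Rightarrow (1)$, I reduce to commuting scalar operators. Operator means are positively homogeneous, $\sigma(cA, cB) = c\sigma(A,B)$ for $c > 0$; this follows from property (2) of Definition 1 applied to the invertible scalar $X = c^{1/2} I$, for which equality holds (as remarked immediately after Definition 1). Fix $x > 0$ and put $c := 1/f_\sigma(x)$, $A := cI$, $B := cxI$. By the Kubo--Ando representation, $\sigma(A,B) = c f_\sigma(x) I = I$, so in particular $\sigma(A,B) \le I$, and hypothesis (2) yields $\sigma(A^r, B^r) \le I$. Applying the representation once more to the commuting pair $A^r = c^r I$, $B^r = c^r x^r I$ gives $\sigma(A^r, B^r) = c^r f_\sigma(x^r) I$, and the operator inequality becomes the scalar inequality of (1) after rearrangement with $c = 1/f_\sigma(x)$.

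For $(1) \Rightarrow (2)$, I need to lift the pointwise scalar inequality on $f_\sigma$ to an operator inequality. By approximating $A$ with $A + \varepsilon I$ and invoking the upper semicontinuity property (3), I may assume $A \in \mathcal{P}$. Setting $T := A^{-1/2} B A^{-1/2}$, the Kubo--Ando formula translates the hypothesis and conclusion of (2) to
\begin{align*}
\sigma(A, B) \le I & \iff f_\sigma(T) \le A^{-1}, \\
\sigma(A^r, B^r) \le I & \iff f_\sigma(A^{-r/2} B^r A^{-r/2}) \le A^{-r}.
\end{align*}
The task is to deduce the second inequality from the first, using only the pointwise hypothesis (1) on $f_\sigma$ together with its operator monotonicity.

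The main obstacle is the noncommutativity of $A$ and $B$: the operator $A^{-r/2} B^r A^{-r/2}$ is not a simple function of $T$ (the two coincide only when $A$ and $B$ commute), and the map $X \mapsto X^r$ fails to be operator monotone for $r > 1$, so one cannot directly raise $f_\sigma(T) \le A^{-1}$ to the $r$-th power. To bridge this gap I would combine (i) a Furuta-type inequality, which controls $A^{-r/2} B^r A^{-r/2}$ in terms of $T$ and $A^{-1}$, with (ii) operator monotonicity of $f_\sigma$, which is what allows the pointwise inequality (1) to be transported through the functional calculus of $T$. The prototype is the Ando--Hiai argument for the weighted geometric mean $\sigma = \sharp_\lambda$ (Theorem \ref{thqt:AH}), whose proof via antisymmetric tensor products and log-majorization provides the template; the role of (1) is to supply the extra scalar information needed to carry the same line of argument through for a general operator mean.
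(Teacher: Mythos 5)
The paper itself offers no proof of this statement: Theorem \ref{thqt:W} is imported from \cite{W2014} as a black box, and the paper's later results (Lemma \ref{lem:equivalent}, Theorem \ref{thm:Ando-Hiai}) are reduced \emph{to} it rather than the reverse. So your attempt can only be measured against the statement itself, and there it runs into genuine trouble.

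The serious problem is in $(2)\Rightarrow(1)$, and it is not cosmetic. With $c=1/f_{\sigma}(x)$, $A=cI$, $B=cxI$ you correctly get $\sigma(A,B)=I$ and, from (2), $c^{r}f_{\sigma}(x^{r})\leq 1$; but unwinding this gives $f_{\sigma}(x^{r})\leq c^{-r}=f_{\sigma}(x)^{r}$, which is the \emph{reverse} of condition (1). The phrase ``after rearrangement'' is concealing a sign error. Carried out honestly, your scalar test shows that (2) forces $f_{\sigma}(x^{r})\leq f_{\sigma}(x)^{r}$ (the class $PMD$), not $PMI$. Worse, the same kind of test shows the equivalence cannot hold as literally stated: the arithmetic mean $f(x)=(1+x)/2$ satisfies (1), since convexity of $t\mapsto t^{r}$ gives $\bigl(\tfrac{1+x}{2}\bigr)^{r}\leq\tfrac{1+x^{r}}{2}$, yet it fails (2), because $A=2I$ and $B=0$ give $\sigma(A,B)=I$ while $\sigma(A^{r},B^{r})=2^{r-1}I\not\leq I$ for $r>1$. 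So before any proof can be written the statement must be repaired --- either (1) should read $f_{\sigma}(x^{r})\leq f_{\sigma}(x)^{r}$, or (2) should be the order-reversed version $\sigma(A,B)\geq I\Rightarrow\sigma(A^{r},B^{r})\geq I$ --- and your commutative reduction, which is exactly the right sanity check, is what detects this; you simply did not follow it through.

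For $(1)\Rightarrow(2)$ you have a plan, not a proof. The reformulation as $f_{\sigma}(A^{-r/2}B^{r}A^{-r/2})\leq A^{-r}$ is routine bookkeeping; the entire content of the theorem is the ``Furuta-type inequality'' you defer to without stating it. The Ando--Hiai antisymmetric-tensor/log-majorization argument relies on the multiplicativity of $x\mapsto x^{\alpha}$ under tensor powers and does not transport to a general representing function $f_{\sigma}$; identifying what replaces it, and verifying that the pointwise hypothesis on $f_{\sigma}$ suffices to drive it, is precisely the step that cannot be left as a gesture. As it stands, neither implication is established.
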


Wada focused on the condition
(1) in \cite{W2014}, and he defined two sets of 
operator monotone functions as follows.
Let $\mathcal{M}$ be a set of all operator monotone 
functions on $(0,\infty)$ such that $f(1)=1$ 
for $f\in \mathcal{M}$, i.e., $\mathcal{M}$ is a set of all 
representing functions of all operator means. Let
\begin{align*}
PMI & = \{ f\in \mathcal{M} |\  f(x)^{r}\leq f(x^{r})
\text{ for all } r\geq 1 
\text{ and } x\in (0,\infty)\},\\
PMD & = \{ f\in \mathcal{M} |\  f(x)^{r}\geq f(x^{r})
\text{ for all } r\geq 1 
\text{ and } x\in (0,\infty)\}.
\end{align*}
If $f\in PMI$, then the correspondence operator mean
satisfies (2) in Theorem \ref{thqt:W}, and if
$f\in PMD$, then the correspondence operator mean
satisfies the opposite inequality of (2) in 
Theorem \ref{thqt:W}.
See \cite{W2014} for more information. 
The second aim of this paper is 
to give a explicit condition of operator monotone
functions in $PMI$ or $PMD$.

This paper is organized as follows:
In Section 2, we shall  
give an integral representation of 
a function in $\mathcal{M}$ via representing 
functions of the power means.
In Section 3, we shall give an explicit condition of 
operator means that 
Theorem \ref{thqt:W} (2) holds.

\section{Integral representation}

In this section, we shall give an 
integral representation of 
a function in $\mathcal{M}$ via the power means.
For $t\in[-1,1]$, let
$$C_{t}=\{ f \in \mathcal{M}|\ 
p_{t}(f'(1); x) 
\leq f(x) \text{ for all $x\in (0,\infty)$}\} .$$

\begin{thm}\label{thm:integral representation}
For each $t\in [-1,1]$ and $f\in C_{t}$,
there exists a probability measure $d\mu$
on $[0,1]$ such that
$$ f(x)=\int_{0}^{1} p_{t}(\lambda; x) 
d\mu(\lambda). $$ 
\end{thm}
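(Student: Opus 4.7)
My plan is to realize $C_t$ as the closed convex hull, inside the compact set $\mathcal{M}$, of the one-parameter family $\{p_t(\lambda;\cdot) : \lambda \in [0,1]\}$, and to extract the probability measure $d\mu$ via a Choquet representation.

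The easy half is that every $f_\mu(x) := \int_0^1 p_t(\lambda;x)\, d\mu(\lambda)$ lies in $C_t$. Differentiating $p_t(\lambda;x) = [1-\lambda+\lambda x^t]^{1/t}$ in $x$ at $x=1$ gives $\lambda$, so $f_\mu'(1) = \int_0^1 \lambda\, d\mu(\lambda) =: \bar\lambda$. Since
\[
\frac{\partial^2 p_t(\lambda;x)}{\partial \lambda^2} = \frac{1}{t}\Bigl(\frac{1}{t}-1\Bigr)(x^t-1)^2[1-\lambda+\lambda x^t]^{1/t-2} \geq 0
\]
for every $t \in [-1,1]\setminus\{0\}$ (and the $t=0$ case $p_0(\lambda;x) = x^\lambda$ is also convex in $\lambda$), Jensen's inequality yields $f_\mu(x) \geq p_t(\bar\lambda;x) = p_t(f_\mu'(1);x)$, so $f_\mu \in C_t$.

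For the substantive reverse inclusion I would endow $\mathcal{M}$ with the topology of pointwise convergence on $(0,\infty)$; Theorem A identifies $\mathcal{M}$ with the weak-$*$ compact set of probability measures on $[0,1]$, rendering $\mathcal{M}$ compact and metrizable. The subset $C_t$ is closed (cut out by the pointwise conditions $f(x)\geq p_t(f'(1);x)$, using continuity of $f\mapsto f'(1)$) and convex, hence compact. Each $p_t(\lambda_0;\cdot)$ is extreme in $C_t$: if $p_t(\lambda_0;\cdot) = \alpha g_1 + (1-\alpha)g_2$ with $g_i \in C_t$, then $\alpha g_1'(1) + (1-\alpha)g_2'(1) = \lambda_0$, and
\[
p_t(\lambda_0;x) = \alpha g_1(x) + (1-\alpha)g_2(x) \geq \alpha p_t(g_1'(1);x) + (1-\alpha) p_t(g_2'(1);x) \geq p_t(\lambda_0;x)
\]
by the convexity just noted, so the inequalities saturate and the decomposition collapses to $g_1 = g_2 = p_t(\lambda_0;\cdot)$. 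Once these are shown to exhaust the extreme points of $C_t$, Choquet's theorem furnishes a probability measure on the extreme set which, via the homeomorphism $\lambda \leftrightarrow p_t(\lambda;\cdot)$, transports to the desired probability measure $d\mu$ on $[0,1]$.

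The main obstacle is the complete characterization of extreme points of $C_t$---specifically, ruling out \emph{exotic} extreme $f \in C_t$ not of the form $p_t(\lambda;\cdot)$. If that step proves delicate, my fallback is to apply Theorem A first, writing $f = \int_0^1 p_{-1}(\lambda;\cdot)\, d\nu(\lambda)$, and then expressing each $p_t(\lambda;\cdot) = \int_0^1 p_{-1}(s;\cdot)\, d\rho_\lambda(s)$ via its own Loewner measure; the theorem is then equivalent, by Fubini, to solving the demixing identity $\nu = \int_0^1 \rho_\lambda\, d\mu(\lambda)$ with a probability measure $\mu$. The hypothesis $f \in C_t$ should be precisely what guarantees a non-negative solution, and verifying this equivalence is the analytical heart of the proof.
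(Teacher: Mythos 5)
Your plan is essentially the paper's proof: put the topology of pointwise convergence on $\mathcal{M}$, note that $C_t$ is a compact convex subset, show each $p_t(\lambda;\cdot)$ is extreme in $C_t$ by exploiting (strict) convexity of $\lambda\mapsto p_t(\lambda;x)$, and invoke Krein--Milman/Choquet to produce the measure. Your extremality argument coincides with the paper's, and your explicit computation of $\partial^2 p_t/\partial\lambda^2$ supplies the convexity facts the paper only asserts; the ``easy half'' you verify (that every mixture of $p_t$'s lands in $C_t$) is not needed for the statement but is a correct sanity check. The step you honestly flag as the main obstacle --- proving that $\mathrm{Ext}(C_t)\subseteq\{p_t(\lambda;\cdot):\lambda\in[0,1]\}$, i.e.\ ruling out exotic extreme points --- is indeed a genuine gap in your proposal: Krein--Milman alone gives $C_t=\mathrm{conv}(\overline{\mathrm{Ext}(C_t)})$, and without knowing the extreme points are exhausted by the $p_t$'s you cannot transport the representing measure to $[0,1]$. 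You should know, though, that the paper's own proof stops at exactly the same point: it establishes only the inclusion $\{p_t(\lambda;\cdot)\}\subseteq\mathrm{Ext}(C_t)$ and then asserts that every element of $C_t$ is a mixture of $p_t$'s. So you have located the crux of the argument rather than missed an idea the paper contains; your Fubini/demixing fallback is likewise only sketched and not carried out.

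One concrete defect to repair: your extremality argument needs \emph{strict} convexity of $\lambda\mapsto p_t(\lambda;x)$ to conclude $g_1'(1)=g_2'(1)=\lambda_0$ from the saturated Jensen inequality, and this fails at $t=1$, where $p_1(\lambda;x)=1-\lambda+\lambda x$ is affine in $\lambda$; indeed $p_1(\tfrac12;\cdot)=\tfrac12p_1(0;\cdot)+\tfrac12p_1(1;\cdot)$ is not extreme in $C_1$. The paper disposes of $t=1$ separately before running the convexity machinery: by the harmonic--arithmetic inequality \eqref{MA-inequality} every $f\in\mathcal{M}$ satisfies $f(x)\le 1-f'(1)+f'(1)x$, so $C_1$ consists exactly of the functions $p_1(\lambda;\cdot)$ themselves and a point mass $\mu=\delta_{f'(1)}$ suffices. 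You should add this case split (and restrict the extreme-point argument to $t\in[-1,1)$) to make the strict-convexity step legitimate.
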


\begin{proof}
First of all, the case of $t=1$ is obvious because 
$C_{1}=\{ p_{1}(\lambda; x) |\ \lambda\in[0,1]\}$
by \eqref{MA-inequality}.
Hence we only consider $t\in [-1,1)$.
We notice that $C_{t}$ is a closed set 
in the topology 
of point-wise convergence of functions.
$\mathcal{M}=C_{-1}$ is 
compact (cf. \cite{H2013,HP2014}).
Hence $C_{t}$ is closed and compact since 
$C_{t}\subseteq C_{-1}$.

We shall prove that $C_{t}$ is a convex 
set. Let $f,g\in C_{t}$. Then
for each $\alpha\in [0,1]$, 
\begin{align*}
p_{t}((1-\alpha)f'(1)+\alpha g'(1);x)
& \leq
(1-\alpha)p_{t}(f'(1);x)+
\alpha p_{t}(g'(1);x)\\
& \leq 
(1-\alpha)f(x)+
\alpha g(x)
\end{align*}
holds for all $x\in (0,\infty)$
since $p_{t}(\lambda;x)$ is convex on 
$\lambda\in [0,1]$.
Hence $(1-\alpha)f(x)+
\alpha g(x)\in C_{t}$, and $C_{t}$ is a convex set.
Therefore $C_{t}$ is a convex, closed and compact
set. By the Krein-Milmann's theorem, we have 
$C_{t}={\rm conv}(\overline{{\rm Ext}(C_{t})})$,
where ${\rm conv}(X)$ and ${\rm Ext}(X)$ are 
convex hull and the set of all extreme points of a set $X$, 
respectively.

Next, we shall show that every $p_{t}$ 
is an extreme point of 
$C_{t}$. Assume that
\begin{align}
 \alpha f(x)+(1-\alpha) g(x)= p_{t}(\mu; x) 
\label{proof:thm4-1}
\end{align}
holds for some $f, g\in C_{t}$, $\mu\in [0,1]$ and all 
$x\in (0,\infty)$. Then 
\begin{equation}
\begin{split}
p_{t}(\mu; x) & =
\alpha f(x)+(1-\alpha) g(x) \\
& \geq 
\alpha  p_{t}(f'(1); x)+(1-\alpha) 
p_{t}(g'(1); x)\\
& \geq 
 p_{t}(\alpha f'(1)+(1-\alpha)g'(t); x).
\end{split}
\label{proof:thm:integral representation-1}
\end{equation}
%
%
Since $\mu=p'_{t}(\mu;1)$ and
 \eqref{proof:thm4-1}, we have
$ \mu=\alpha f'(1)+(1-\alpha) g'(1)$, and 
by \eqref{proof:thm:integral representation-1}, 
we have
$$ \alpha p_{t}(f'(1); x)+(1-\alpha)
p_{t}(g'(1); x)=
p_{t}(\mu; x). $$
Since $p_{t}(\mu; x)$ is strictly convex on 
$\mu\in [0,1]$, we have $\mu=f'(1)=g'(1)$, and 
$p_{t}(\mu; x)=f(x)=g(x)$. 
Hence $p_{t}(\mu; x)$ is an extreme point of 
$C_{t}$.

Therefore each element in $C_{t}$ can be 
representing by a convex combination of 
$p_{t}$.
Hence Theorem 
\ref{thm:integral representation} is 
completed.
\end{proof}

Especially, if $t\to 0$, then we have 
the following corollary.

\begin{cor}
For each $f\in C_{0}$,
there exists a probability measure $d\mu$
on $[0,1]$ such that
$$ f(x)=\int_{0}^{1} x^{\lambda} d\mu(\lambda). $$ 
\end{cor}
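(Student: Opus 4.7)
The plan is to apply Theorem \ref{thm:integral representation} directly with $t=0$. The only substantive observation is that the representing function of the weighted power mean at $t=0$ is
$$ p_0(\lambda; x) \;=\; \lim_{t \to 0}\bigl[1-\lambda+\lambda x^{t}\bigr]^{1/t} \;=\; x^{\lambda}, $$
i.e.\ the representing function of the weighted geometric mean. Substituting this identity into the integral representation supplied by Theorem \ref{thm:integral representation} yields exactly the desired formula.

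To make sure the theorem actually applies at $t=0$, I would verify that each of the three ingredients in its proof goes through at this parameter value. First, $C_0 \subseteq C_{-1} = \mathcal{M}$ is closed under point-wise convergence (the defining inequality $x^{f'(1)} \le f(x)$ is preserved in the limit) and hence compact. Second, $C_0$ is convex: for fixed $x>0$ the map $\lambda \mapsto x^{\lambda} = e^{\lambda \log x}$ is convex on $[0,1]$, which is the only property of $p_t(\lambda; x)$ as a function of $\lambda$ used in the convexity step. Third, each $x^{\mu}$ is an extreme point of $C_0$: whenever $x \neq 1$, the function $\mu \mapsto x^{\mu}$ is strictly convex on $[0,1]$, which is what the extreme-point argument of Theorem \ref{thm:integral representation} requires.

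With these checks in hand, the Krein-Milmann argument of Theorem \ref{thm:integral representation} applies verbatim and produces a probability measure $d\mu$ on $[0,1]$ representing $f$ as a convex combination of the extreme points $\{x^{\lambda}\}_{\lambda \in [0,1]}$. The result follows.

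There is essentially no obstacle: once one recognises $p_0(\lambda;x)=x^{\lambda}$ and notes that the convexity and strict convexity of $\lambda \mapsto x^{\lambda}$ are inherited from the $t<0$ case, the corollary is an immediate specialisation of the main theorem.
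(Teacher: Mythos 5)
Your proposal is correct and matches the paper's approach exactly: the paper obtains this corollary simply by specializing Theorem \ref{thm:integral representation} to $t=0$, where $p_{0}(\lambda;x)=x^{\lambda}$. Your additional verification that the compactness, convexity, and strict-convexity ingredients survive at $t=0$ is sound but not spelled out in the paper, which treats the case as immediate.
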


We remark that Theorem 
\ref{thm:integral representation}
can be extended from $p_{t}$ 
into any representing function of an operator mean
such that it is strict convex on the weight parameter.
The weighted operator mean is obtained 
by using the algorithms in \cite{FK1989} and 
\cite{PP2014}.
However the author does not know any 
operator mean which is strict convex on the
weight parameter except the power means.

\section{An application to the Ando-Hiai inequality}

In this section, we shall discuss the Ando-Hiai  
inequality. Especially, we shall give concrete examples
some operator means which satisfy the 
Ando-Hiai inequality.

\begin{thm}\label{thm:Ando-Hiai}
Let $\sigma$ be an operator mean with 
a representing function $f_{\sigma}$.
Then the following conditions are equivalent:
\begin{itemize}
\item[{\rm (1)}] $x^{f'_{\sigma}(1)} \leq 
f_{\sigma}(x)$ for all $x\in (0,\infty)$,
\item[{\rm (2)}] $\sigma(A,B)\leq I$ implies 
$\sigma(A^{r},B^{r})\leq I$ for all $r\geq 1$ and
$A,B\in \mathcal{PS}$.
\end{itemize}
\end{thm}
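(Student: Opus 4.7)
The plan is to use Wada's Theorem \ref{thqt:W} to reduce Theorem \ref{thm:Ando-Hiai} to the purely analytic equivalence between condition (1) and the condition
$$
(1'): \quad f_\sigma(x)^{r} \leq f_\sigma(x^{r}) \quad \text{for all } r \geq 1 \text{ and } x \in (0,\infty),
$$
which is precisely (1) of Theorem \ref{thqt:W}. The crucial observation is that condition (1) of Theorem \ref{thm:Ando-Hiai} is exactly the statement $f_\sigma \in C_{0}$ in the notation of Section 2, so I intend to exploit the integral representation supplied by Theorem \ref{thm:integral representation} with $t=0$.

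For (1) $\Rightarrow$ (1'), since $f_\sigma \in C_{0}$, the corollary to Theorem \ref{thm:integral representation} yields a probability measure $d\mu$ on $[0,1]$ with $f_\sigma(x) = \int_{0}^{1} x^{\lambda}\, d\mu(\lambda)$. Because $t \mapsto t^{r}$ is convex for $r \geq 1$, Jensen's inequality immediately gives
$$
f_\sigma(x)^{r} = \left( \int_{0}^{1} x^{\lambda}\, d\mu(\lambda) \right)^{r} \leq \int_{0}^{1} x^{r\lambda}\, d\mu(\lambda) = f_\sigma(x^{r}),
$$
which is exactly (1').

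For (1') $\Rightarrow$ (1), I would first rewrite (1') by the substitution $y = x^{r}$, $s = 1/r \in (0,1]$, obtaining the equivalent statement $f_\sigma(y^{s}) \leq f_\sigma(y)^{s}$ for all $s \in (0,1]$ and $y \in (0,\infty)$. Fix $y > 0$ and set $\phi(s) = \log f_\sigma(y^{s}) - s\log f_\sigma(y)$. Since $f_\sigma(1)=1$ we have $\phi(0)=0$, and the rewritten (1') gives $\phi(s) \leq 0$ on $[0,1]$, hence $\phi'(0^{+}) \leq 0$. A routine chain-rule computation yields $\phi'(0^{+}) = f'_\sigma(1)\log y - \log f_\sigma(y)$, so $y^{f'_\sigma(1)} \leq f_\sigma(y)$, which is (1).

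The substantive ingredient is Theorem \ref{thm:integral representation}; once the $t=0$ integral representation is available, the forward implication is an immediate Jensen estimate and the reverse implication is just a first-order expansion at $s=0$. The only point requiring a little care is justifying the differentiation of $s \mapsto \log f_\sigma(y^{s})$ at $s=0$, but this is standard since representing functions are real-analytic (hence smooth) on $(0,\infty)$. I therefore expect no serious obstacle beyond assembling the two standard pieces.
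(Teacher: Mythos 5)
Your proposal is correct and follows essentially the same route as the paper: the paper also reduces Theorem \ref{thm:Ando-Hiai} to Wada's Theorem \ref{thqt:W} via a lemma proving exactly your equivalence (1) $\Leftrightarrow$ (1'), using the $t=0$ integral representation plus Jensen's inequality for one direction and a first-order limit at the exponent $0$ (phrased via L'H\^opital rather than your one-sided derivative of $\phi$) for the other. The two presentations differ only cosmetically.
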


Combined with Theorems \ref{thqt:W} and
\ref{thm:Ando-Hiai}, we can get
$$ PMI  = \{ f\in M |\  x^{f'(1)} \leq f(x)
\text{ for all  } x\in (0,\infty)\}.$$
To prove Theorem \ref{thm:Ando-Hiai}, 
we shall give the following lemma.

\begin{lem}\label{lem:equivalent}
Let $f\in \mathcal{M}$.
Then the following conditions are equivalent.
\begin{itemize}
\item[{\rm (1)}] $f(x)^{r}\leq f(x^{r})$ holds
for all $r\geq 1$ and $x\in (0,\infty)$,
\item[{\rm (2)}] $x^{f'(1)} \leq f(x)$ holds
for all $x\in (0,\infty)$.
\end{itemize}
\end{lem}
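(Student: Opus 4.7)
The plan is to prove the two implications separately. For $(2)\Rightarrow(1)$ I would invoke the $t=0$ case of Theorem \ref{thm:integral representation} and finish by Jensen's inequality; for $(1)\Rightarrow(2)$ I would differentiate the inequality in (1) at $r=1$ to extract a scalar differential inequality in $x$ and integrate it.

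For $(2)\Rightarrow(1)$: condition (2) is exactly $p_{0}(f'(1);x)\le f(x)$ for all $x>0$, i.e.\ $f\in C_{0}$. Applying Theorem \ref{thm:integral representation} with $t=0$ yields a probability measure $d\mu$ on $[0,1]$ such that $f(x)=\int_{0}^{1}x^{\lambda}\,d\mu(\lambda)$. Since $y\mapsto y^{r}$ is convex on $(0,\infty)$ for every $r\ge 1$, Jensen's inequality gives
\[
f(x)^{r}=\Bigl(\int_{0}^{1}x^{\lambda}\,d\mu(\lambda)\Bigr)^{r}\le \int_{0}^{1}x^{r\lambda}\,d\mu(\lambda)=f(x^{r}),
\]
which is (1).

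For $(1)\Rightarrow(2)$: fix $x>0$ with $x\ne 1$ and set $\phi(r)=\log f(x^{r})-r\log f(x)$. Then $\phi(1)=0$, and (1) forces $\phi(r)\ge 0$ for $r\ge 1$, so the right-hand derivative at $r=1$ is nonnegative, i.e.\ $f'(x)x\log x\ge f(x)\log f(x)$. Define $g(x)=\log f(x)/\log x$ for $x\ne 1$ and $g(1)=f'(1)$, the L'H\^{o}pital limit. A short computation shows that the previous inequality is precisely $g'(x)\ge 0$, so $g$ is monotone increasing on $(0,\infty)$. Hence $g(x)\ge f'(1)$ for $x>1$ and $g(x)\le f'(1)$ for $x<1$; in both cases, multiplying through by $\log x$ with attention to its sign yields $\log f(x)\ge f'(1)\log x$, and exponentiating delivers (2).

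The step needing the most care is the $(1)\Rightarrow(2)$ direction: one must verify that the derivative bound at $r=1$ holds for \emph{every} $x>0$ so that the pointwise information globalises into the monotonicity of $g$, and one must track the sign of $\log x$ separately in the cases $x>1$ and $x<1$. The remaining ingredients---Jensen's inequality and recognising (2) as the statement $f\in C_{0}$---become routine once the integral representation of Theorem \ref{thm:integral representation} is available.
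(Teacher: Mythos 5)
Your proof of $(2)\Rightarrow(1)$ coincides with the paper's: both recognize (2) as the statement $f\in C_{0}$, invoke Theorem \ref{thm:integral representation} to write $f(x)=\int_{0}^{1}x^{\lambda}\,d\mu(\lambda)$, and conclude by convexity of $y\mapsto y^{r}$. For $(1)\Rightarrow(2)$, however, you take a genuinely different route. The paper rewrites (1) as $f(x^{p})^{1/p}\leq f(x)$ for $p\in(0,1]$ and lets $p\to 0$, identifying $\lim_{p\to 0}f(x^{p})^{1/p}=x^{f'(1)}$ by L'H\^{o}pital; in effect it extracts the needed information from the $r\to\infty$ end of hypothesis (1) in a single limit. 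You instead extract it from the $r\to 1^{+}$ end: differentiating $\phi(r)=\log f(x^{r})-r\log f(x)$ at $r=1$ gives the pointwise inequality $f'(x)x\log x\geq f(x)\log f(x)$, which you correctly identify as $g'(x)\geq 0$ for $g(x)=\log f(x)/\log x$, and monotonicity of $g$ through the value $g(1)=f'(1)$ then yields (2) after the sign analysis on $\log x$. Both arguments are valid (differentiability is not an issue, since operator monotone functions are analytic on $(0,\infty)$, and the continuity of $g$ at $1$ follows from $f(1)=1$). The paper's limit argument is shorter; yours costs a little more bookkeeping but establishes the stronger intermediate fact that $\log f(x)/\log x$ is nondecreasing on $(0,\infty)$, which is of independent interest and makes transparent why only an infinitesimal portion of hypothesis (1) is actually needed.
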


\begin{proof}
Proof of (1) $\Longrightarrow$ (2).
(1) is equivalent to $f(x^{p})^{\frac{1}{p}}\leq 
f(x)$ for all $p\in (0,1]$ and $x\in (0,\infty)$.
Then we have $\lim_{p\to 0}f(x^{p})^{\frac{1}{p}}\leq 
f(x)$. Here by the H'lospital's theorem, we have
$$ \lim_{p\to 0}\log f(x^{p})^{\frac{1}{p}}=
\lim_{p\to 0}\frac{\log f(x^{p})}{p}=
\lim_{p\to 0}\frac{f'(x^{p})x^{p}\log x}{f(x^{p})}
=\log x^{f'(1)}. $$
Hence we have (2).

Proof of (2) $\Longrightarrow$ (1).
If $f$ satisfies (2), then $f\in C_{0}$.
By Theorem \ref{thm:integral representation}, 
there exists a probability measure $d\mu$ 
on $[0,1]$ such that
$$ f(x)=\int_{0}^{1}x^{\lambda}d\mu(\lambda). $$
Since $t^{\frac{1}{p}}$ is a convex function 
for $p\in (0,1)$, we have
$$ f(x^{p})^{\frac{1}{p}} =
\left(\int_{0}^{1}x^{p\lambda}dE_{\lambda}
\right)^{\frac{1}{p}}
\leq 
\int_{0}^{1}x^{\lambda}dE_{\lambda}=f(x). $$
Therefore the proof is completed.
\end{proof}

\begin{proof}[Proof of Theorem \ref{thm:Ando-Hiai}]
By Lemma \ref{lem:equivalent}, (1) of 
Theorems \ref{thqt:W} and \ref{thm:Ando-Hiai}
are equivalent. Hence the proof is completed.
\end{proof}

Using Theorem \ref{thm:Ando-Hiai},
we can obtain examples of operator means 
such that Theorem \ref{thm:Ando-Hiai} (2) 
holds.

\medskip
\noindent
{\bf Examples.}
\begin{itemize}
\item[(1)] The Logarithmic mean:
The representing function is 
$f(x)=\frac{x-1}{\log x}$, and it satisfies 
$\sqrt{x}\leq f(x)\leq \frac{1+x}{2}$ for all 
$x\in (0, \infty)$.
Hence the operator logarithmic mean $L(A,B)$ 
satisfies that $L(A,B)\leq I$ implies 
$L(A^{r}, B^{r})\leq I$ for all $r\geq 1$ and
$A,B\in \mathcal{PS}$.

\medskip

\item[(2)] The identric mean:
The representing function is
$f(x)=\exp(\frac{x\log x}{x-1}-1)$.
It can be obtained as $F_{1,0}(x)$, where
$$ F_{p,q}(x)=\left(\int_{0}^{1} [1-\lambda +
\lambda x^{p}]^{\frac{q}{p}}d\lambda
\right)^{\frac{1}{q}}=
\left(\frac{p}{p+q}\frac{x^{p+q}-1}{x^{p}-1}\right)
^{\frac{1}{q}}.$$
$F_{p,q}(x)$ is a representing function of 
the extension of the power difference mean.
In \cite{UWYY2015}, $F_{p,q}$ is monotone increasing
on each parameter $p,q\in [-1,1]$ 
(the cases $p=0$ and $q=0$ are defined by 
taking limits). Then we have
\begin{align*}
\frac{1+x}{2} & = F_{1,1}(x) \\
& \geq 
F_{1,0}(x) =\exp\left(\frac{x\log x}{x-1}-1\right)\\
& \geq 
F_{0,0}(x)=\sqrt{x}.
\end{align*}
Hence the operator identric mean $\mathcal{I}(A,B)$ 
satisfies that $\mathcal{I}(A,B)\leq I$ implies 
$\mathcal{I}(A^{r}, B^{r})\leq I$ for all $r\geq 1$ and
$A,B\in \mathcal{PS}$.

\medskip

\item[(3)] The Heinz mean:
For $t\in [0,1]$, the Heinz mean $M_{t}$ is
defined as follows:
$$ M_{t}(A,B)=\frac{A\sharp_{t}B+A\sharp_{1-t}B}{2}. $$
It is easy that 
$$ A\sharp B\leq M_{t}(A,B)\leq \frac{A+B}{2}. $$
Hence $M_{t}(A,B)\leq I$ implies 
$M_{t}(A^{r}, B^{r})\leq I$ for all $r\geq 1$ and
$A,B\in \mathcal{PS}$.
\end{itemize}

\medskip

In \cite{W2014}, it is shown that
$f(x)\in PMI$ if and only if 
$f^{*}(x):=f(x^{-1})^{-1}, f^{\perp}(x):=
x f(x)^{-1}\in PMD$. Hence we have the following.
$$ PMD  = \{ f\in M | \ x^{f'(1)} \geq f(x)
\text{ for all  } x\in (0,\infty)\}.$$

\medskip
\noindent
{\bf Theorem \ref{thm:Ando-Hiai}'.}
{\it Let $\sigma$ be an operator mean with 
a representing function $f_{\sigma}$.
Then the following conditions are equivalent:}
\begin{itemize}
\item[{\rm (1)}] $x^{f'_{\sigma}(1)} \geq 
f_{\sigma}(x)$ {\it for all} $x\in (0,\infty)$,
\item[{\rm (2)}] $\sigma(A,B)\geq I$ {\it implies} 
$\sigma(A^{r},B^{r})\geq I$ {\it for all $r\geq 1$ and}
$A,B\in \mathcal{PS}$.
\end{itemize}

\end{document}